\documentclass[12pt]{amsart}

\usepackage{amssymb, amscd, txfonts}
\usepackage[all]{xy}
\usepackage{graphicx}

\setlength{\textwidth}{16cm}
\setlength{\oddsidemargin}{0cm}
\setlength{\evensidemargin}{0cm}
\setlength{\topmargin}{0cm}
\setlength{\textheight}{22.5cm}

\numberwithin{equation}{section}

\sloppy

\newtheorem{thm}{Theorem}[section]

\newtheorem{lem}[thm]{Lemma}

\newtheorem{conj}[thm]{Conjecture}

\theoremstyle{definition}

\theoremstyle{remark}

\renewcommand{\hom}{\operatorname{Hom}}
\renewcommand{\ker}{\operatorname{Ker}}

\newcommand{\Z}{\mathbb{Z}}

\newcommand{\C}{\mathbb{C}}
\newcommand{\F}{\mathbb{F}}

\DeclareMathOperator{\im}{Im}

\DeclareMathOperator{\tr}{tr}

\begin{document}

\title[Twisted Alexander polynomials and ideal points giving Seifert surfaces]
{Twisted Alexander polynomials and ideal points giving Seifert surfaces}
\author[T.~Kitayama]{Takahiro KITAYAMA}
\address{Department of Mathematics, Tokyo Institute of Technology,
2-12-1 Ookayama, Meguro-ku, Tokyo 152-8551, Japan}
\email{kitayama@math.titech.ac.jp}
\subjclass[2010]{Primary~57M27, Secondary~57Q10}
\keywords{twisted Alexander polynomial, Reidemeister torsion, character variety}

\begin{abstract}
The coefficients of twisted Alexander polynomials of a knot induce
regular functions of the $SL_2(\C)$-character variety. 
We prove that the function of the highest degree has a finite value
at an ideal point which gives a minimal genus Seifert surface
by Culler-Shalen theory.
It implies a partial affirmative answer to a conjecture
by Dunfield, Friedl and Jackson.
\end{abstract}

\maketitle

\section{Introduction}

The aim of this paper is to present an application
of \textit{twisted Alexander polynomials} to \textit{Culler-Shalen theory}
for knots, following a conjecture
by Dunfield, Friedl and Jackson~\cite[Conjecture 8.9]{DFJ}.

In the notable work~\cite{CS} Culler and Shalen established a method
to construct essential surfaces in a $3$-manifold from an ideal point
of the $SL_2(\C)$-character variety.
Their theory applies Bass-Serre theory~\cite{Se1, Se2} to the functional field
of the representation variety.
Twisted Alexander polynomials~\cite{Li, W}, which are known
to be essentially equal to certain Reidemeister torsion~\cite{KL, Kitan},
are invariants of a $3$-manifold associated to linear representations
of the fundamental group.
The torsion invariants generalize many properties of the Alexander polynomial,
and were shown by Friedl and Vidussi~\cite{FV1, FV3} to detect fiberedness
for $3$-manifolds and the Thurston norms of irreducible ones
which are not closed graph manifolds.
We refer the reader to the expositions \cite{Sh} and \cite{FV2}
for literature and related topics on Culler-Shalen theory
and twisted Alexander polynomials respectively.

Let $K$ be a null-homologous knot in a rational homology $3$-sphere.
We denote by $X^{irr}(K)$ the Zariski closure
of the $SL_2(\C)$-character variety of $K$.
Dunfield, Friedl and Jackson~\cite{DFJ} showed
that for each irreducible component $X_0$ in $X^{irr}(K)$
certain normalizations of twisted Alexander polynomials induce
an invariant $\mathcal{T}_K^{X_0} \in \C[X_0][t, t^{-1}]$ called
the \textit{torsion polynomial function} of $K$.
The invariant $\mathcal{T}_K^{X_0}$ satisfies
that $\deg \mathcal{T}_K^{X_0} \leq 4g(K) -2$ and
that $\mathcal{T}_K^{X_0}(\chi)(t^{-1}) = \mathcal{T}_K^{X_0}(\chi)(t)$
for $\chi \in X_0$, where $g(K)$ is the genus of $K$
(cf. \cite[Theorem 1.1]{FK1}, \cite[Theorem 1.5]{FKK}).
For a curve $C$ in $X_0$ we denote by $\mathcal{T}_K^C \in \C[C][t, t^{-1}]$
the restriction of $\mathcal{T}_K^{X_0}$ to $C$,
and by $c(\mathcal{T}_K^C) \in \C[C]$ the coefficient function
in $\mathcal{T}_K^C$ of the highest degree $2g(K)-1$.
It is known that if $K$ is a fibered knot, then $c(\mathcal{T}_K^C)$ is
the constant function with value $1$ (cf. \cite{C, FK1, GKM}).

\begin{conj}[{\cite[Conjecture 8.9]{DFJ}}] \label{conj_DFJ}
If an ideal point $\chi$ of a curve $C$ in $X^{irr}(K)$ gives a Seifert surface
of $K$, then the leading coefficient of $\mathcal{T}_K^C$ has a finite value
at $\chi$.
\end{conj}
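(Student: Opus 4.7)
The plan is to exploit Culler--Shalen theory to split $\pi_1(E_K)$ along $\pi_1(S)$, and then to express the torsion polynomial as a ``Seifert matrix'' determinant built from data on the complement of $S$, on which the representation remains bounded as one approaches the ideal point.

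First, by hypothesis the ideal point $\chi \in C$ gives the Seifert surface $S$. Via Bass--Serre theory applied to the tree associated to the discrete valuation at $\chi$, this yields a splitting of $\pi_1(E_K)$ as an HNN extension over $\pi_1(S)$ with vertex group $\pi_1(N)$, where $N$ denotes $E_K$ cut open along $S$. The crucial consequence is that the tautological family of representations, restricted to $\pi_1(N)$, takes values in $SL_2(\mathcal{O}_\chi)$, where $\mathcal{O}_\chi \subset \C(C)$ is the valuation ring at $\chi$.

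Second, following the computation of twisted Reidemeister torsion from a Seifert surface (as in work of Friedl--Kim and Cha), the polynomial $\mathcal{T}_K^C(t) \in \C[C][t, t^{-1}]$ admits, up to a unit of $\C[C]$, a presentation of the form $\det(t A_+ - A_-)$, where $A_\pm$ are the matrices representing the two pushoff-induced maps $H_1(S; V_\rho) \to H_1(N; V_\rho)$ in the twisted chain complex of the cobordism. In particular $c(\mathcal{T}_K^C) = \det A_+$ up to a unit. Since $A_+$ is assembled from the restriction of the representations to $\pi_1(N)$, its entries lie in $\mathcal{O}_\chi$; hence $\det A_+ \in \mathcal{O}_\chi$, which is precisely the assertion that $c(\mathcal{T}_K^C)$ has a finite value at $\chi$.

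The main obstacle is the second step: identifying the normalized DFJ torsion polynomial with a single ``top'' determinant requires the twisted chain complex of the cobordism $(N, S)$ to be controlled enough that $A_+$ and $A_-$ are square matrices whose determinant captures the full information at degree $2g(K)-1$. This works cleanly when $S$ realizes the knot genus --- the minimal-genus hypothesis of the abstract --- since then generic acyclicity holds in the correct range and Poincar\'e duality on $S$ provides compatible bases. For non-minimal Seifert surfaces additional twisted homology appears on both sides and the top-degree coefficient of $\mathcal{T}_K^C$ need not coincide with $\det A_+$, so this approach does not reach the conjecture in full generality. The remaining technical work is sign and normalization bookkeeping to match the precise DFJ convention for $\mathcal{T}_K^C$, which is delicate but conceptually routine once the chain-level identification in Step 2 is established.
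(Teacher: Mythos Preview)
Your outline is essentially the paper's own argument, and---as you yourself note---it proves only the minimal-genus case, which is exactly Theorem~\ref{thm_main} rather than Conjecture~\ref{conj_DFJ} in full. The paper does not prove the conjecture either; it states it and then establishes the weaker Theorem~\ref{thm_main} by the same two-step strategy you describe: (1) express the torsion via the Mayer--Vietoris/Seifert decomposition so that the top coefficient depends only on $\pi_1 N$-data, and (2) invoke Culler--Shalen to see that such data is finite at $\chi$.

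One point where the paper is sharper than your sketch: rather than writing the leading coefficient as $\det A_+$ ``up to a unit of $\C[C]$''---which is awkward because $A_+ = (\iota_+)_*$ goes between different spaces and the basis dependence must be tracked---the paper identifies it intrinsically as the relative Reidemeister torsion $\tau_\rho(N, S\times 1)$ (Lemma~\ref{lem_A}). This object is canonically defined once $H_*^\rho(N, S\times 1)=0$, and Lemma~\ref{lem_B} shows it is a polynomial in the trace functions $I_\gamma$ for $\gamma\in\pi_1 N$. Step~(2) is then Lemma~\ref{lem_C}, phrased in terms of finiteness of trace functions rather than your (essentially equivalent, for $SL_2$) statement that the restricted representation lands in $SL_2(\mathcal{O}_\chi)$. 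So your ``sign and normalization bookkeeping'' is handled in the paper by passing to the relative torsion, which absorbs the basis choices cleanly.
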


In this paper we give a partial affirmative answer
to Conjecture \ref{conj_DFJ}.
The main theorem of this paper is as follows:

\begin{thm} \label{thm_main}
If an ideal point $\chi$ of a curve $C$ in $X^{irr}(K)$ gives
a minimal genus Seifert surface of $K$, then $c(\mathcal{T}_K^C)(\chi)$ is
finite.
\end{thm}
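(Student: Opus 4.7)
The plan is to combine a Mayer--Vietoris formula for twisted Reidemeister torsion along the Seifert surface with a Bass--Serre analysis of the ideal point.

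First I would use the Seifert surface $\Sigma$ of genus $g(K)$ associated to $\chi$. Setting $N := E(K) \setminus \nu(\Sigma)$, one obtains the HNN decomposition $\pi_1(E(K)) = \pi_1(N) *_{\pi_1(\Sigma)}$. Under the identification of $\mathcal{T}_K^C$ with twisted Reidemeister torsion, the Mayer--Vietoris short exact sequence of twisted chain complexes associated to the infinite cyclic cover of $E(K)$ yields, for generic $\rho \in C$, a formula
\[
\mathcal{T}_K^C(\rho)(t) \;\doteq\; \det\bigl(t\,\iota_+(\rho) - \iota_-(\rho)\bigr),
\]
where $\iota_\pm(\rho) \colon H_1(\Sigma ; \C^2_\rho) \to H_1(N ; \C^2_\rho)$ are the two inclusion-induced maps. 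The minimal genus hypothesis ensures $\dim H_1(\Sigma ; \C^2_\rho) = 4g(K) - 2$ at generic $\rho$, so that the right-hand side is a polynomial of degree exactly $4g(K) - 2$ in $t$, and its leading coefficient $\det(\iota_+)$ agrees up to a unit with $c(\mathcal{T}_K^C)$ after the symmetrizing normalization.

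Next I would apply Culler--Shalen. The ideal point $\chi$ provides a discrete valuation $v$ on $F := \C(C)$ with respect to which the tautological representation $\rho \colon \pi_1(E(K)) \to SL_2(F)$ acts on the Bruhat--Tits tree of $SL_2(F_v)$ without a global fixed point. By the assumption that $\chi$ gives the Seifert surface $\Sigma$, the Bass--Serre splitting obtained from this action matches the HNN decomposition above: $\pi_1(N)$ stabilizes a vertex and $\pi_1(\Sigma)$ stabilizes an incident edge. A change of basis of $F^2$ corresponding to this vertex lattice arranges $\rho(\pi_1(N)) \subset SL_2(R_v)$, where $R_v \subset F$ is the valuation ring of $v$. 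Then both chain complexes $C_*(\Sigma ; R_v^2)$ and $C_*(N ; R_v^2)$, and the homology map $\iota_+$ between them, are defined over $R_v$. Choosing $R_v$-bases represents $\det(\iota_+)$ by an element of $R_v$, whose reduction modulo $\mathfrak{m}_v$ in $R_v / \mathfrak{m}_v = \C$ gives the finite value of $c(\mathcal{T}_K^C)$ at $\chi$.

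The main obstacle I anticipate is reconciling the Dunfield--Friedl--Jackson normalization of $\mathcal{T}_K^C$, which is a canonical regular function on $C$, with the concrete Mayer--Vietoris determinant, which carries unit ambiguities: one must track that the coefficient $c(\mathcal{T}_K^C)$ really is captured by $\det(\iota_+)$ rather than by a lower coefficient that could vanish. The minimal genus hypothesis is crucial here, because it forces the degree of the Mayer--Vietoris polynomial to be exactly $4g(K) - 2$, so that its leading coefficient aligns with the term at $t^{2g(K)-1}$ in the symmetrized $\mathcal{T}_K^C$; for a non-minimal-genus surface the polynomial would have strictly larger degree and the leading coefficient would sit above $c(\mathcal{T}_K^C)$, leaving the boundedness argument inconclusive.
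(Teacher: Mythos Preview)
Your strategy matches the paper's: cut $E$ along the minimal genus Seifert surface, use multiplicativity of torsion for the resulting Mayer--Vietoris sequence to isolate the top coefficient as a quantity depending only on $\rho|_{\pi_1 N}$, and then use the Culler--Shalen fact that $\pi_1 N$ fixes a vertex of the Bass--Serre tree to conclude finiteness. The paper resolves the normalization obstacle you flag by identifying the leading coefficient \emph{exactly} (up to sign, not merely up to units in $\C(C)^\times$) as the relative Reidemeister torsion $\tau_\rho(N, S\times 1)$; this is then shown to be a polynomial in the trace functions $I_\gamma$ for $\gamma\in\pi_1 N$, i.e.\ a regular function on $X(N)$, and those traces are finite at $\chi$ by the standard Culler--Shalen lemma. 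Your candidate $\det(\iota_+)$ differs from $\tau_\rho(N, S\times 1)$ by the factor $\tau_\rho(N;h_N)/\tau_\rho(S;h_S)$, which is not obviously a unit in $R_v$, so the $R_v$-argument as written has a small gap; if instead you apply it directly to $\tau_\rho(N, S\times 1)$ (which, after passing to a $2$-dimensional model of $(N,S\times 1)$ with no relative $0$-cells, is the determinant of a single boundary matrix with entries in $R_v$), your valuation-ring argument goes through and is precisely the trace-function argument of the paper rephrased over $R_v$.
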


The statement of Theorem \ref{thm_main} is actually weaker than
that of Conjecture \ref{conj_DFJ} on the following two points:
\begin{enumerate}
\item An essential Seifert surface is not necessarily of minimal genus. 
\item If $\deg \mathcal{T}_K^C < 4g(K)-2$, then $c(\mathcal{T}_K^C)(\chi) = 0$
but the leading coefficient of $\mathcal{T}_K^C(\chi)$ is not necessarily finite.
\end{enumerate}
Concerning (1) it should be remarked that classes of knots
with a unique isotopy class of essential Seifert surfaces are known.
For instance, Lyon~\cite[Theorem 2 and Corollary 2.1]{Ly} constructed such a class of non-fibered knots containing $p$-twist knots with $|p| > 1$.

A generalization of Theorem \ref{thm_main} for general $3$-manifolds
will be discussed in a successive work \cite{Kitay}.
See \cite{KKM, KM, Mo} for recent works on other conjectures
by Dunfield-Friedl-Jackson.
 
This paper is organized as follows.
Section $2$ sets up notation and terminology, and provides a brief overview
of Culler-Shalen theory. 
In particular, the precise meaning of `an ideal point giving a surface'
is described.
In Section $3$ we review some basics of Reidemeister torsion, and recalls
properties of torsion polynomial functions.
In this paper we mainly work with Reidemeister torsion
rather than twisted Alexander polynomials, based on the equivalence.
Finally, in Section $4$ we prove Theorem \ref{thm_main}.

\subsection*{Acknowledgment}
This article is prepared for the proceedings of the conference
``The Quantum Topology and Hyperbolic Geometry''
(Nha Trang, Vietnam, May $13$-$17$, $2013$).
The author gratefully acknowledges the organizers' hospitality.
The author would like to thank Stefan Friedl and Takayuki Morifuji
for valuable discussions and helpful comments.
The author also wishes to express his thanks to the anonymous referee
for several useful comments in revising the manuscript.

\section{Culler-Shalen theory}

We begin with briefly reviewing Culler-Shalen theory~\cite{CS, Sh}.
For more details on character varieties we refer the reader to \cite{LM}.

\subsection{Character varieties and ideal points}

Let $M$ be a compact orientable $3$-manifold.
The algebraic group $SL_2(\C)$ acts
on the affine algebraic set $\hom(\pi_1 M, SL_2(\C))$ by conjugation.
The algebro-geometric quotient $X(M)$ of the action is called
the \textit{$SL_2(\C)$-character variety} of $M$.
We denote by $t \colon \hom(\pi_1 M, SL_2(\C)) \to X(M)$ the quotient map.
For a representation $\rho \colon \pi_1 M \to SL_2(\C)$
its \textit{character} $\chi_\rho \colon \pi_1 M \to \C$ is given
by $\chi_\rho(\gamma) = \tr \rho(\gamma)$ for $\gamma \in \pi_1 M$.
The character variety $X(M)$ is known to be realized by the set
of the characters $\chi_\rho$ of $SL_2(\C)$-representations $\rho$,
and $t(\rho) = \chi_\rho$ under the identification.
For $\gamma \in \pi_1 M$ a \textit{trace function}
$I_\gamma \colon X(M) \to \C$ is defined by
$I_\gamma(\chi_\rho) = \tr \rho(\gamma)$ for a representation
$\rho \colon \pi_1 M \to SL_2(\C)$, and it is known
that the coordinate ring of $X(M)$ is generated by
$\{ I_\gamma \}_{\gamma \in \pi_1 M}$.

Let $C$ be a curve in  $X(M)$ which is not necessarily irreducible,
and let $\widehat{C}$ be its smooth projective model.
The points where the rational map $\widehat{C} \to C$ is undefined are called
the \textit{ideal points} of $C$.

Let $K$ be a knot in a rational homology $3$-sphere, and we denote by $E$
its exterior.
In the following we set $X(K) = X(E)$ and denote by $X^{irr}(K)$
the Zariski closure of the subset of $X(K)$ consisting of the characters
of irreducible representations.

\subsection{Essential surfaces given by ideal points}

A non-empty properly embedded compact orientable surface $S$ in $M$ is called
\textit{essential} if for any component $S_0$ of $S$
the homomorphism $\pi_1 S_0 \to \pi_1 M$ induced by the natural inclusion map
is injective, and if no component of $S$ is homeomorphic to $S^2$
or boundary parallel.

Let $\chi$ be an ideal point of a curve $C$ in $X(M)$.
There exists a curve $D$ in $t^{-1}(C)$ such that $t|_D$ is not a constant map,
and that $t|_D$ extends to a regular map $\widehat{D} \to \widehat{C}$
between the smooth projective models.
We take a point $\tilde{\chi}$ of $\widehat{D}$ in the preimage of $\chi$.
Associated to the valuation of $\C(D)$ at $\tilde{\chi}$
Bass-Serre theory~\cite{Se1, Se2} gives a canonical action of $SL_2(\C(D))$
on a tree $T_{\tilde{\chi}}$ without inversions.
Pulling back the action
by the tautological representation $\pi_1 M \to SL_2(\C(D))$, we have an action
of $\pi_1 M$ on $T_{\tilde{\chi}}$. 
Culler and Shalen~\cite[Theorem 2.2.1]{CS} showed that the action is
non-trivial, i.e., for any vertex of $T_{\tilde{\chi}}$ the stabilizer
of the action is not whole the group $\pi_1 M$. 
Now essentially due to Stallings, Epstein and Waldhausen, there exists
a map $f \colon M \to T_{\tilde{\chi}} / \pi_1 M$ such that $f^{-1}(P)$ is
an essential surface, where $P$ is the set of the middle points of edges.
We say that \textit{$\chi$ gives an essential surface $S$} if $S = f^{-1}(P)$
for some $f$ as above.

\section{Torsion invariants}

We review basics of Reidemeister torsion and recall
torsion polynomial functions introduced
by Dunfield, Friedl and Jackson~\cite{DFJ}.
For more details on torsion invariants we refer the reader
to the expositions \cite{Mi, N, T1, T2}.

\subsection{Reidemeister torsion}

Let $C_* = (C_n \xrightarrow{\partial_n} C_{n-1} \to \cdots \to C_0)$ be
a finite dimensional chain complex over a commutative field $\F$,
and let $c = \{ c_i \}$ and $h = \{ h_i \}$ be bases of $C_*$ and $H_*(C_*)$
respectively.
Choose bases $b_i$ of $\im \partial_{i+1}$ for each $i = 0, 1, \dots n$,
and take a basis $b_i h_i b_{i-1}$ of $C_i$ for each $i$ as follows.
Picking a lift of $h_i$ in $\ker \partial_i$ and combining it with $b_i$,
we first obtain a basis $b_i h_i$ of $C_i$.
Then picking a lift of $b_{i-1}$ in $C_i$ and combining it with $b_i h_i$,
we obtain a basis $b_i h_i b_{i-1}$ of $C_i$.
The \textit{algebraic torsion} $\tau(C_*, c, h)$ is defined as:
\[ \tau(C_*, c, h) := \prod_{i=0}^n [b_i h_i b_{i-1} / c_i]^{(-1)^{i+1}} ~\in \F^\times, \]
where $[b_i h_i b_{i-1} / c_i]$ is the determinant of the base change matrix
from $c_i$ to $b_i h_i b_{i-1}$.
If $C_*$ is acyclic, then we write $\tau(C_*, c)$.
It can be easily checked that $\tau(C_*, c, h)$ does not depend on the choice
of $b_i$ and $b_i h_i b_{i-1}$.

Let $(Y, Z)$ be a finite CW-pair.
In the following when we write $C_*(\widetilde{Y}, \widetilde{Z})$,
$\widetilde{Y}$ stands for the universal cover of $Y$
and $\widetilde{Z}$ the pullback of $Z$
by the universal covering map $\widetilde{Y} \to Y$.
For a representation $\rho \colon \pi_1 Y \to GL(V)$
over a commutative field $\F$ we define the twisted homology group as:
\[ H_i^\rho(Y, Z; V) := H_i(C_*(\widetilde{Y}, \widetilde{Z}) \otimes_{\Z[\pi_1 Y]} V). \]
If $Z$ is empty, then we write $H_i^\rho(Y; V)$.

For an $n$-dimensional representation $\rho \colon \pi_1 Y \to GL(V)$
and a basis $h$ of $H_*^\rho(Y, Z; V)$
the \textit{Reidemeister torsion} $\tau_\rho(Y, Z; h)$
associated to $\rho$ and $h$ is defined as follows.
We choose a lift $\tilde{e}$ in $\widetilde{Y}$
for each cell $e \subset Y \setminus Z$.
Then
\[ \tau_\rho(Y, Z; h) := \tau(C_*(\widetilde{Y}, \widetilde{Z}) \otimes_{\Z[\pi_1 Y]} V, \langle \tilde{e} \otimes 1 \rangle_e, h) ~\in \F^\times / (-1)^n \det \rho(\pi_1 Y). \]
If $Z$ is empty or if $H_*^\rho(Y, Z; V) = 0$, then we drop $Z$ or $h$
in the notation $\tau_\rho(Y, Z; h)$.
It can be easily checked that $\tau_\rho(Y, Z; h)$ does not depend
on the choice of $\tilde{e}$ and is invariant under conjugation
of representations.
It is known that Reidemeister torsion is
a simple homotopy invariant.

\subsection{Torsion polynomial functions}

Let $K$ be a null-homologous knot in a rational homology $3$-sphere.
We take an epimorphism $\alpha \colon \pi_1 E \to \langle t \rangle$,
where $\langle t \rangle$ is the infinite cyclic group generated
by the indeterminate $t$. 
For a representation $\rho \colon \pi_1 E \to GL_n(\F)$ we define
a representation $\alpha \otimes \rho \colon \pi_1 E \to GL_n(\F(t))$
by $\alpha \otimes \rho(\gamma) = \alpha(\gamma) \rho(\gamma)$
for $\gamma \in \pi_1 E$.
If  $H_*^{\alpha \otimes \rho}(E; \F(t)^n) = 0$,
then the Reidemeister torsion $\tau_{\alpha \otimes \rho}(E)$ is defined,
and is known by Kirk and Livingston~\cite{KL}, and Kitano~\cite{Kitan}
to be essentially equal to the \textit{twisted Alexander polynomial}
associated to $\alpha$ and $\rho$.
Friedl and Kim~\cite[Theorem 1.1]{FK1} showed that
\[ \deg \tau_{\alpha \otimes \rho}(E) \leq n(2g(K) - 1) \]
(See also \cite{FK2}).
It is known by Cha~\cite{C}, Friedl and Kim~\cite{FK1},
and Goda, Kitano and Morifuji \cite{GKM} that if $K$ is a fibered knot, then
\[ \deg \tau_{\alpha \otimes \rho}(E) = n(2g(K) - 1) \]
and $\tau_{\alpha \otimes \rho}(E)$ is represented by a fraction
of monic polynomials in $\F[t, t^{-1}]$.
See \cite{FV2} for details on twisted Alexander polynomials
and their precise relation with Reidemeister torsion.

Let $X_0$ be an irreducible component of $X^{irr}(K)$.
Dunfield, Friedl and Jackson~\cite[Theorem 1.5]{DFJ} showed that there exists
an invariant $\mathcal{T}_K^{X_0} \in \C[X_0][t, t^{-1}]$ called
the \textit{torsion polynomial function} of $X_0$
such that the following are satisfied for $\chi_\rho \in X_0$:
\begin{itemize}
\item[(i)] If $H_*^{\alpha \otimes \rho}(E; \C(t)^2) = 0$ then,
$\mathcal{T}_K^{X_0}(\chi_\rho) = \tau_{\alpha \otimes \rho}(E) \in \C(t) / \langle t \rangle$.
\item[(ii)] If $H_*^{\alpha \otimes \rho}(E; \C(t)^2) \neq 0$ then,
$\mathcal{T}_K^{X_0}(\chi_\rho) = 0$.
\item[(iii)] $\mathcal{T}_K^{X_0}(\chi_\rho)(t^{-1}) = \mathcal{T}_K^{X_0}(\chi_\rho)(t)$.
\end{itemize}
For a curve $C$ in $X_0$ we denote by $\mathcal{T}_K^C \in \C[X_0][t, t^{-1}]$
the restriction of $\mathcal{T}_K^{X_0}$ to $C$,
and by $c(\mathcal{T}_K^C) \in \C[C]$ the coefficient function
in $\mathcal{T}_K^C$ of the highest degree $2g(K)-1$.

\section{Main theorem}

Now we prove Theorem \ref{thm_main}.
We first prepare key lemmas for the proof.

\subsection{Lemmas}

Let $K$ be a null-homologous knot in a rational homology $3$-sphere and let $S$ be
a minimal genus Seifert surface of $K$.
A tubular neighborhood of $S$ is identified with $S \times [-1, 1]$.
We set $N := E \setminus S \times (-1, 1)$, and denote
by $\iota_\pm \colon S \to N$ the natural homeomorphisms
such that $\iota_\pm(S) = S \times (\pm 1)$.
Since the homomorphisms $\pi_1 S \to \pi_1 E$ and $\pi_1 N \to \pi_1 E$
induced by the natural inclusion maps are injective, in the following
we regard $\pi_1 S$ and $\pi_1 N$ as subgroups of $\pi_1 E$.

\begin{lem} \label{lem_A0}
Let $\rho \colon \pi_1 E \to GL_n(\F)$ be an irreducible representation
with $n > 1$ such that $H_*^{\alpha \otimes \rho}(E; \F(t)^n) = 0$.
Then the following hold:
\begin{itemize}
\item[(i)] $H_0^\rho(S; \F^n) = H_0^\rho(N; \F^n) = H_2^\rho(N; \F^n) = 0$.
\item[(ii)] If $\deg \tau_{\alpha \otimes \rho}(E) = n(2g(K)-1)$,
then $(\iota_\pm)_* \colon H_1^\rho(S; \F^n) \to H_1^\rho(N; \F^n)$ are
isomorphisms.
\end{itemize}
\end{lem}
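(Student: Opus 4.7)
The plan is to exploit the Mayer--Vietoris long exact sequence coming from cutting $E$ along $S$ and passing to the infinite cyclic cover $\widetilde{E}\to E$ determined by $\alpha$. Since $\widetilde{E}$ is built from $\Z$ copies of $N$ glued along lifts of $S$, and $\alpha$ is trivial on both $\pi_{1}S$ and $\pi_{1}N$ (as both lift to $\widetilde{E}$), one obtains a short exact sequence of chain complexes
\[
0\to C_{*}^{\rho}(S;V)\otimes_{\F}\F(t)\xrightarrow{(\iota_{+})_{*}-t(\iota_{-})_{*}}C_{*}^{\rho}(N;V)\otimes_{\F}\F(t)\to C_{*}^{\alpha\otimes\rho}(E;\F(t)^{n})\to 0
\]
with $V=\F^{n}$, and hence the homology long exact sequence
\[
\cdots\to H_{i}^{\rho}(S;V)\otimes_{\F}\F(t)\xrightarrow{d_{i}}H_{i}^{\rho}(N;V)\otimes_{\F}\F(t)\to H_{i}^{\alpha\otimes\rho}(E;\F(t)^{n})\to\cdots,
\]
where $d_{i}=(\iota_{+})_{*}-t(\iota_{-})_{*}$. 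This sequence will drive both parts.

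For part (i), the vanishing hypothesis $H_{*}^{\alpha\otimes\rho}(E;\F(t)^{n})=0$ immediately makes each $d_{i}$ an isomorphism, so $\dim_{\F}H_{i}^{\rho}(S;V)=\dim_{\F}H_{i}^{\rho}(N;V)$ for every $i$. Since $S$ has non-empty boundary, $H_{2}^{\rho}(S;V)=0$, and the isomorphism $d_{2}$ forces $H_{2}^{\rho}(N;V)\otimes_{\F}\F(t)=0$, hence $H_{2}^{\rho}(N;V)=0$ by finite-dimensionality over $\F$. The maps $(\iota_{\pm})_{*}\colon H_{0}^{\rho}(S;V)\to H_{0}^{\rho}(N;V)$ are tautologically surjective (they are quotient maps of coinvariants for the inclusion $\pi_{1}S\hookrightarrow\pi_{1}N$), and the dimension equality upgrades them to isomorphisms. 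To pin down that the common dimension is actually zero I plan to combine irreducibility of $\rho$ (which gives $H_{0}^{\rho}(E;V)=V_{\pi_{1}E}=0$, since $n>1$) with the minimal-genus assumption on $S$: a nonzero $\pi_{1}S$-coinvariant would permit a reduction of the Thurston norm of the Seifert class below $2g(K)-1$ (via a sutured-manifold / Friedl--Kim style argument), contradicting $g(S)=g(K)$.

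For part (ii), once (i) is available the homologies of $S$ and $N$ with coefficients in $V$ are concentrated in degree $1$, of common dimension $n(2g(K)-1)$ by the Euler characteristic of $S$. Multiplicativity of Reidemeister torsion over the short exact sequence of chain complexes then collapses the computation of $\tau_{\alpha\otimes\rho}(E)$ to
\[
\tau_{\alpha\otimes\rho}(E)=\det\bigl((\iota_{+})_{*}-t(\iota_{-})_{*}\bigr)\in\F[t]
\]
up to units in $\F(t)$, where the determinant is taken on $H_{1}^{\rho}(S;V)$. Writing $d_{1}=A_{1}-tB_{1}$ with $A_{1}=(\iota_{+})_{*}$ and $B_{1}=(\iota_{-})_{*}$, the polynomial $\det d_{1}$ has degree at most $n(2g(K)-1)$, with leading coefficient $(-1)^{n(2g(K)-1)}\det B_{1}$ and constant term $\det A_{1}$. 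The hypothesis $\deg\tau_{\alpha\otimes\rho}(E)=n(2g(K)-1)$ then forces $\det B_{1}\neq 0$, so $(\iota_{-})_{*}$ is an isomorphism; and the symmetry $\mathcal{T}_{K}^{X_{0}}(\chi)(t^{-1})=\mathcal{T}_{K}^{X_{0}}(\chi)(t)$ interchanges the leading and constant coefficients, yielding $\det A_{1}\neq 0$ and thus that $(\iota_{+})_{*}$ is also an isomorphism.

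The main obstacle is the vanishing step in (i): the Mayer--Vietoris machinery alone gives only that $(\iota_{\pm})_{*}$ on $H_{0}$ are isomorphisms, not that the common group is zero, so the minimal-genus hypothesis must enter substantially. I expect the cleanest route to be through the Thurston norm interpretation of the twisted Alexander polynomial (equivalently, applying the Friedl--Kim bound to $S$ itself in combination with the symmetry of $\tau$), excluding a nonzero $\pi_{1}S$-coinvariant of $V$ under the acyclicity hypothesis.
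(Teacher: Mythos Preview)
Your overall strategy—the Wang/Mayer--Vietoris sequence coming from cutting $E$ along $S$, followed by a degree count on $\det d_1$—is precisely the machinery of \cite{FK1} that the paper invokes, so at the level of approach you are aligned with the paper's (very terse) proof. Two points need correction.

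For the vanishing $H_0^\rho(S;V)=0$ you correctly isolate the crux, but the proposed justification does not work: saying that a nonzero $\pi_1S$-coinvariant ``would permit a reduction of the Thurston norm of the Seifert class below $2g(K)-1$'' conflates representation-theoretic data with topology. The Thurston norm equals $2g(K)-1$ independently of $\rho$, and a coinvariant of $V$ does not manufacture a surface of smaller genus. The vanishing genuinely uses irreducibility of $\rho$ with $n>1$ together with minimality of $S$; this is exactly what the paper imports from \cite[Proposition~3.5]{FK1} and \cite[Proposition~A.3]{FKK}, and the mechanism there is not the sutured-manifold reduction you sketch. You should consult those references rather than expect the acyclicity hypothesis or a norm inequality to supply this step.

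In part~(ii) you appeal to the symmetry $\mathcal{T}_K^{X_0}(\chi)(t^{-1})=\mathcal{T}_K^{X_0}(\chi)(t)$ to obtain $\det A_1\neq 0$. That symmetry is specific to the normalized $SL_2(\C)$ torsion polynomial function and is unavailable for a general $\rho\colon\pi_1E\to GL_n(\F)$, which is the setting of the lemma. Fortunately you do not need it: since $\tau_{\alpha\otimes\rho}(E)$ is only well defined up to $\pm t^k\det\rho(\pi_1E)$, the quantity $\deg\tau_{\alpha\otimes\rho}(E)$ must be read as the \emph{width} (highest exponent minus lowest). The polynomial $\det(A_1-tB_1)$ has width at most $n(2g(K)-1)$ with extremal coefficients $\det A_1$ and $\pm\det B_1$, so width equal to $n(2g(K)-1)$ forces both to be nonzero simultaneously, giving both $(\iota_\pm)_*$ invertible without any symmetry. (Relatedly, ``up to units in $\F(t)$'' should be ``up to a factor in $\F^\times$''; that is what makes the degree comparison meaningful.)
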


\begin{proof}
This lemma is proved by techniques developed in \cite{FK1}
together with \cite[Proposition A.3]{FKK}
in terms of twisted Alexander polynomials.
We give only the main steps of the proof with corresponding parts
in the references, and the details are left to the reader.

It follows from \cite[Proposition 3.5]{FK1} and \cite[Proposition A.3]{FKK}
that $H_0^\rho(S; \F^n) = 0$.
Since $H_*^{\alpha \otimes \rho}(E; \F(t)^n) = 0$, the long exact sequence
in \cite[Proposition 3.2]{FK1} implies that
\[  H_i^\rho(N; \F^n) = H_i^\rho(S; \F^n) = 0 \]
for $i = 0, 2$, which proves (i).

It follows from Proof of \cite[Theorem 1.1]{FK1}
that if $\deg \tau_{\alpha \otimes \rho}(E) = n(2g(K)-1)$,
then the inequalities in \cite[Proposition 3.3]{FK1} turn into equalities.
Now (ii) follows from the proof of \cite[Proposition 3.3]{FK1}.
\end{proof}

\begin{lem} \label{lem_A}
Let $\rho \colon \pi_1 E \to GL_n(\F)$ be an irreducible representation
such that $H_*^{\alpha \otimes \rho}(E; \F(t)^n) = 0$.
If $\deg \tau_{\alpha \otimes \rho}(E) = n(2g(K)-1)$, then
\[ \tau_{\alpha \otimes \rho}(E) = \tau_\rho(N, S \times 1) \det(t \cdot id - (\iota_+)_*^{-1} \circ (\iota_-)_*), \]
where $(\iota_\pm)_*$ are
the isomorphisms $H_1^\rho(S; \F^n) \to H_1^\rho(N; \F^n)$.
\end{lem}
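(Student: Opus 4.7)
The plan is to realize $C_*^{\alpha \otimes \rho}(E; \F(t)^n)$ as the mapping cone of a chain map between $C_*^\rho(S; \F^n) \otimes_\F \F(t)$ and $C_*^\rho(N; \F^n) \otimes_\F \F(t)$, compute its torsion via the mapping-cone formula, and extract the characteristic-polynomial factor by a change of homology basis on $H_1^\rho(N; \F^n)$. The remaining $\tau_\rho(N, S \times 1)$ factor will then emerge from a second application of multiplicativity to the pair $(N, \iota_+(S))$.

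First, since $\pi_1 S, \pi_1 N \subset \ker \alpha$, the infinite cyclic cover $\widetilde{E}_\alpha$ decomposes as a $\Z$-indexed chain of copies of $N$ glued along copies of $S$ via $\iota_\pm$, and the associated Mayer-Vietoris sequence (after using $S \times [-1,1] \simeq S$ to collapse the slab) yields a short exact sequence of $\F(t)$-chain complexes
$$0 \to C_*^\rho(S;\F^n) \otimes_\F \F(t) \xrightarrow{\phi} C_*^\rho(N;\F^n) \otimes_\F \F(t) \to C_*^{\alpha \otimes \rho}(E;\F(t)^n) \to 0$$
with $\phi = t(\iota_+)_* - (\iota_-)_*$, the factor of $t$ arising from the deck translation between adjacent lifts of $N$. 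This identifies the right-hand complex with $\mathrm{Cone}(\phi)$, and the hypothesis that $E$ is $(\alpha \otimes \rho)$-acyclic becomes the statement that $\phi_*$ is an isomorphism in homology, which by Lemma~\ref{lem_A0}(i) concerns only the degree-one homology.

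Next I would apply the standard formula for the torsion of an acyclic mapping cone: for any basis $h$ of $H_1^\rho(S; \F^n)$,
$$\tau_{\alpha\otimes\rho}(E) = \tau_\rho(N;\phi_* h) / \tau_\rho(S;h).$$
By Lemma~\ref{lem_A0}(ii), $h' := (\iota_+)_*(h)$ is a basis of $H_1^\rho(N; \F^n)$; writing $A := (\iota_+)_*^{-1}(\iota_-)_*$ in the basis $h$, the transition matrix from $h'$ to $\phi_*(h)$ is $tI - A$, and the change-of-basis formula for Reidemeister torsion yields
$$\tau_\rho(N;\phi_*h) = \tau_\rho(N; h') \cdot \det\bigl(t\cdot\mathrm{id} - (\iota_+)_*^{-1}(\iota_-)_*\bigr).$$

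Finally, to identify $\tau_\rho(N; h')/\tau_\rho(S; h)$ with $\tau_\rho(N, S \times 1)$, I would apply multiplicativity of Reidemeister torsion to the short exact sequence $0 \to C_*^\rho(\iota_+(S); \F^n) \to C_*^\rho(N; \F^n) \to C_*^\rho(N, \iota_+(S); \F^n) \to 0$. The relative complex is acyclic by Lemma~\ref{lem_A0} together with the long exact sequence of the pair, and in the compatible bases $h$ on $H_1^\rho(\iota_+(S)) \cong H_1^\rho(S)$ and $h' = (\iota_+)_*(h)$ on $H_1^\rho(N)$, the homology long exact sequence reduces to the identity as an acyclic complex and so contributes trivially. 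Combining the three steps yields the claimed formula. The hardest part will be pinning down sign conventions: the sign choice in $\phi$, the orientations induced on $\iota_\pm(S) \subset \partial N$, and the $(-1)^i$ exponents in the mapping-cone, change-of-basis, and multiplicativity formulas must all align so that the result is exactly $\det(t\cdot\mathrm{id} - (\iota_+)_*^{-1}(\iota_-)_*)$ rather than a sign- or $t$-twisted variant.
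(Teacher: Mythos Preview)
Your proposal is correct and follows essentially the same route as the paper: the paper uses the same two short exact sequences (the Mayer--Vietoris sequence for $E$ cut along $S$ with chain map $t(\iota_+)_* - (\iota_-)_*$, and the pair sequence for $(N, S\times 1)$), applies multiplicativity of torsion to each, and then cancels using $\tau_{\alpha\otimes\rho}(S;h)=\tau_\rho(S;h)$ and $\tau_{\alpha\otimes\rho}(N;(\iota_+)_*h)=\tau_\rho(N;(\iota_+)_*h)$, which is exactly your observation that $\pi_1 S,\pi_1 N\subset\ker\alpha$. Your mapping-cone/change-of-basis phrasing is just a repackaging of the same multiplicativity computation, and your caveat about sign and $t$-power ambiguities is appropriate since the equality is only asserted in $\F(t)^\times/(\pm t^{\Z})$.
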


\begin{proof}
We pick a basis $h$ of $H_1^{\rho}(S; \F^n)$.
Since
$H_1^{\alpha \otimes \rho}(S; \F(t)^n) = H_1^{\rho}(S; \F^n) \otimes \F(t)$
and
$H_1^{\alpha \otimes \rho}(N; \F(t)^n) = H_1^{\rho}(N; \F^n) \otimes \F(t)$,
$h$ and $(\iota_+)_*(h)$ can be seen also as bases
of $H_1^{\alpha \otimes \rho}(S; \F(t)^n)$
and $H_1^{\alpha \otimes \rho}(N; \F(t)^n)$ respectively.
Taking appropriate triangulations of $E$, $N$ and $S$ and lift of simplices
in the universal covers, we have the following exact sequences: 
\begin{align*}
0 \to C_*(\widetilde{S}) \otimes \F(t)^n \xrightarrow{t (\iota_+)* - (\iota_-)} C_*(\widetilde{N}) \otimes \F(t)^n \to C_*(\widetilde{E}) \otimes \F(t)^n \to 0, \\
0 \to C_*(\widetilde{S}) \otimes \F^n \xrightarrow{(\iota_+)_*} C_*(\widetilde{N}) \otimes \F^n \to C_*(\widetilde{N}, \widetilde{S \times 1}) \otimes \F^n \to 0,
\end{align*}
where the local coefficients in the first and second sequences are understood
to be induced by $\alpha \otimes \rho$ and $\rho$ respectively.
By the multiplicativity of Reidemeister torsion~\cite[Theorem 3.1]{Mi}
and Lemma \ref{lem_A0} we have
\begin{align*}
\tau_{\alpha \otimes \rho}(N; (\iota_+)_*(h)) \det(t \cdot id - (\iota_+)_*^{-1} \circ (\iota_-)_*) &= \tau_{\alpha \otimes \rho}(S; h) \tau_{\alpha \otimes \rho}(E), \\
\tau_{\rho}(N; (\iota_+)_*(h)) &= \tau_{\rho}(S; h) \tau_{\rho}(N, S \times 1).
\end{align*}
By the functoriality of Reidemeister torsion~\cite[Proposition 3.6]{T1} we have
\begin{align*}
\tau_{\alpha \otimes \rho}(N; (\iota_+)_*(h)) &= \tau_{\rho}(N; (\iota_+)_*(h)), \\
\tau_{\alpha \otimes \rho}(S; h) &= \tau_{\rho}(S; h).
\end{align*}
The desired formula now follows from the above equalities.
\end{proof}

\begin{lem} \label{lem_B}
There exists a regular function $f$ of $X(N)$ such that
\[ f(\chi_\rho) = \tau_\rho(N, S \times 1) \]
for a representation $\rho \colon \pi_1 N \to GL_n(\F)$
satisfying that $H_*^\rho(N, S \times 1; \F^n) = 0$.
\end{lem}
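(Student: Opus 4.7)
The plan is to realize $\tau_\rho(N, S \times 1)$ explicitly as $\det \partial_2^\rho$ for a carefully chosen relative CW decomposition of $(N, S \times 1)$, and then to verify that this determinant is a conjugation-invariant polynomial in the matrix entries of $\rho$. The key is to make the relative chain complex concentrated in two adjacent degrees with equal ranks, via an Euler-characteristic argument.

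First I would build a relative handle decomposition of $N$ starting from $S \times 1$ that uses only $1$- and $2$-handles. No $0$-handles are needed because $N$ is connected and $S \times 1$ is connected; no $3$-handles are needed because the complementary boundary $\partial N \setminus (S \times 1) = (S \times \{-1\}) \cup (\partial E \cap N)$ contains no $2$-sphere components. The Mayer-Vietoris decomposition $E = N \cup (S \times [-1,1])$ combined with $\chi(E) = 0$ gives $\chi(N) = \chi(S)$, hence $\chi(N, S \times 1) = 0$, which forces the number $p$ of $1$-handles to equal the number of $2$-handles.

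Converting the handle decomposition to a relative CW structure and choosing lifts of cells to the universal cover, the relative cellular chain complex becomes
\[ 0 \longrightarrow \Z[\pi_1 N]^p \xrightarrow{\;\partial_2\;} \Z[\pi_1 N]^p \longrightarrow 0, \]
whose boundary matrix has entries in $\Z[\pi_1 N]$ read off from the attaching circles of the $2$-handles. Tensoring with $\F^n$ via $\rho$ yields a two-term complex whose $np \times np$ boundary matrix $\partial_2^\rho$ has entries that are $\Z$-linear combinations of matrix entries of $\rho(\gamma)$ for finitely many fixed $\gamma \in \pi_1 N$; in particular, these entries are regular functions on the representation variety of $\pi_1 N$. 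Under the acyclicity hypothesis $H_*^\rho(N, S \times 1; \F^n) = 0$, the map $\partial_2^\rho$ is an isomorphism, and the standard torsion formula for a two-term acyclic complex gives $\tau_\rho(N, S \times 1) = \det \partial_2^\rho$.

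Finally, conjugating $\rho$ by $g \in GL_n(\F)$ conjugates $\partial_2^\rho$ by the block-diagonal matrix $g^{\oplus p}$ and hence preserves its determinant; so $\rho \mapsto \det \partial_2^\rho$ is a conjugation-invariant regular function on the representation variety, descending to a regular function $f$ on $X(N)$ with the required property. The main obstacle is producing the handle decomposition with no $0$- or $3$-handles; one handles this by constructing a relative Morse function on $N$ adapted to $S \times 1$ and cancelling stray index-$0$ and index-$3$ critical points via standard handle-cancellation techniques, leveraging the connectivity of $N$ and $S \times 1$ and the absence of $S^2$ components in the complementary boundary. Once the CW structure is in hand, the remaining linear algebra is routine.
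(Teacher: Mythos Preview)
Your proposal is correct and follows essentially the same strategy as the paper: reduce the relative cellular complex to a two-term complex of equal rank so that the torsion is the determinant of a single boundary matrix over $\Z[\pi_1 N]$, then argue that this determinant descends to a regular function on $X(N)$. The only differences are in execution rather than substance: the paper obtains the two-term complex by invoking a simple homotopy equivalence of $(N, S\times 1)$ with a $2$-dimensional CW pair having no relative $0$-cells (without constructing it), whereas you build it directly from a relative handle decomposition and an Euler-characteristic count; and the paper establishes regularity on $X(N)$ by writing $\det\rho(A)$ explicitly as a polynomial in the traces $\tr\rho(\gamma)$ via Newton's identities, whereas you appeal to conjugation invariance and the fact that $\C[X(N)]$ is the invariant ring of the representation variety. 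Both pairs of arguments are equivalent; your version is slightly more self-contained on the handle side, while the paper's trace formulation dovetails more directly with Lemma~\ref{lem_C} in the proof of the main theorem.
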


\begin{proof}
Let $\rho \colon \pi_1 N \to GL_n(\F)$ be a representation such that $H_*^\rho(N, S \times 1; \F^n) = 0$.
We take a finite $2$-dimensional CW-pair $(V, W)$ with $C_0(V, W) = 0$
which is simple homotopy equivalent to $(N, S \times 1)$.
The differential map
\[ C_2(\widetilde{V}, \widetilde{W}) \otimes_{\F[\pi_1 V]} \F^n \to C_1(\widetilde{V}, \widetilde{W}) \otimes_{\F[\pi_1 V]} \F^n \]
is represented by the matrix $\rho(A)$ obtained as follows from a matrix $A$
in $\Z[\pi_1 V]$.
We first consider the matrix whose $(i, j)$-entries are the image of that of $A$
by $\rho$.
Then we naturally forget the matrix structures of the entries to get
a matrix $\rho(A)$ in $\C$.
By the simple homotopy invariance and the definition of Reidemeister torsion
we have
\[ \tau_\rho(N, S \times 1) = \tau_\rho(V, W) = \det \rho(A). \]
It follows from basics of Linear algebra that $\det \rho(A)$ is written
as a polynomial in $\{ \tr \rho(A)^i \}_{i \in \Z}$,
and that $\tr \rho(A)^i$ is as one
in $\{ \tr \rho(\gamma) \}_{\gamma \in \pi_1 V}$, which proves the lemma.
\end{proof}

The following lemma is a direct corollary of \cite[Theorem 2.2.1]{CS} and \cite[Proposition 2.3.1]{CS}.

\begin{lem} \label{lem_C}
Suppose that an ideal point $\chi$ of a curve in $X^{irr}(K)$ gives
an essential surface $S$.
Then $I_\gamma(\chi) \in \C$ for $\gamma \in \pi_1 E$ represented by a loop
in the complement of $S$.
\end{lem}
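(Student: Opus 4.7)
The plan is to show that a loop in $E \setminus S$ represents an element of $\pi_1 E$ stabilizing a vertex of the Bass--Serre tree $T_{\tilde\chi}$ from Section 2, and then to invoke the valuation criterion on traces of vertex stabilizers.

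First, I would use the construction of $S$ recalled in Section 2: there is a $\pi_1 E$-equivariant lift $\tilde f \colon \widetilde E \to T_{\tilde\chi}$ of $f$, and $S$ is the descent of $\tilde f^{-1}(\pi_1 E \cdot P)$, where $P \subset T_{\tilde\chi}$ is the $\pi_1 E$-invariant set of edge-midpoints. Let $\gamma \in \pi_1 E$ be represented by a loop $\ell \subset E \setminus S$, and lift $\ell$ to a path $\tilde\ell$ in $\widetilde E$ from a basepoint $\tilde x$ to $\gamma \cdot \tilde x$. Then $\tilde f \circ \tilde\ell$ is a path in $T_{\tilde\chi}$ disjoint from $P$. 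The connected components of $T_{\tilde\chi} \setminus P$ are precisely the open half-stars of vertices, so $\tilde f \circ \tilde\ell$ lies in a single such component, say the half-star of a vertex $v$. Since $\gamma$ carries the half-star of $v$ to that of $\gamma \cdot v$, and distinct half-stars are disjoint, I conclude $\gamma \cdot v = v$.

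Next, I would invoke \cite[Proposition 2.3.1]{CS} (the setting and non-trivial-action hypothesis being supplied by \cite[Theorem 2.2.1]{CS}): any element of $SL_2(\C(D))$ that fixes a vertex of $T_{\tilde\chi}$ has trace of non-negative valuation at $\tilde\chi$. Applied to the image of $\gamma$ under the tautological representation $\pi_1 E \to SL_2(\C(D))$, this gives that the pullback of $I_\gamma$ along $t|_D$ is a regular function on $\widehat D$ at $\tilde\chi$. Since the rational map $\widehat D \to \widehat C$ is regular and sends $\tilde\chi \mapsto \chi$, the function $I_\gamma$ extends to a regular function on $\widehat C$ at $\chi$, and so $I_\gamma(\chi) \in \C$.

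The only geometric point requiring attention is the identification of the components of $T_{\tilde\chi} \setminus P$ with open half-stars of vertices — a purely tree-theoretic observation, but one that crucially uses that $S$ is constructed as $f^{-1}(P)$ with $P$ the set of edge-midpoints rather than, say, the preimage of the vertex set. Once this is noted, the remainder is a direct unpacking of the Bass--Serre dictionary of \cite{CS}, so that the lemma is indeed a direct corollary of \cite[Theorem 2.2.1]{CS} and \cite[Proposition 2.3.1]{CS}.
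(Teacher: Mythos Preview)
Your argument is correct and follows exactly the approach the paper indicates: the paper gives no proof beyond stating that the lemma is a direct corollary of \cite[Theorem~2.2.1]{CS} and \cite[Proposition~2.3.1]{CS}, and you have simply unpacked that corollary in detail. The only remark is that the non-triviality from \cite[Theorem~2.2.1]{CS} is needed to produce the essential surface $S$ in the first place (hence to make sense of the hypothesis), while the trace bound for vertex stabilizers from \cite[Proposition~2.3.1]{CS} does the actual work once you have shown $\gamma$ fixes a vertex; your phrasing slightly blurs these roles, but the logic is sound.
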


\subsection{Proof of the main theorem}

\begin{proof}[Proof of Theorem \ref{thm_main}]
Let $\chi$ be an ideal point of a curve $C$ in $X^{irr}(K)$ which gives
a minimal genus Seifert surface $S$ of $K$,
and let $\rho \colon \pi_1 E \to SL_2(\C)$ be an irreducible representation
such that $\chi_\rho \in C$.
If $H_*^{\alpha \otimes \rho}(E; \C(t)^2) = 0$ and
if $\deg \tau_{\alpha \otimes \rho}(E) = 4g(K)-2$,
then by Lemma \ref{lem_A} we have
\[ c(\mathcal{T}_K^C)(\chi_\rho) = \tau_\rho(N, S \times 1), \]
and so it follows from Lemma \ref{lem_B}
that the function $c(\mathcal{T}_K^C)$ is in the subring of $\C[C]$
generated by $I_\gamma$ for $\gamma \in \pi_1 N$.
Since it follows from Lemma \ref{lem_C} that $I_\gamma(\chi) \in \C$
for $\gamma \in \pi_1 N$, we obtain $c(\mathcal{T}_K^C)(\chi) \in \C$,
which completes the proof. 
\end{proof}


\end{document}